\documentclass[11pt,a4paper]{article}
\usepackage[utf8]{inputenc}
\usepackage[german,english]{babel}
\usepackage{amsmath,amssymb,amsthm}
\usepackage{geometry}
\newcommand{\R}{\mathbb{R}}
\newcommand{\Bloc}{BV_{\mathrm{loc}}}
\newcommand{\Wloc}{W_{\mathrm{loc}}}
\newcommand{\Lloc}{L_{\mathrm{loc}}}

\newcommand{\Lmeasure}{\mathcal{L}}

\newcommand{\nablaA}{\nabla^a} 
\newcommand{\nablaS}{\nabla^s} 
\newcommand{\nablaC}{\nabla^c} 
\newcommand{\nablaJ}{\nabla^j} 

\newtheorem{theorem}{Theorem}[section]
\newtheorem{definition}[theorem]{Definition}
\newtheorem{remark}[theorem]{Remark}
\newtheorem{corollary}[theorem]{Corollary}
\newtheorem{lemma}[theorem]{Lemma}

\numberwithin{equation}{section}

\begin{document}

\begin{center}
    {\LARGE \bf The Liouville property for entire local minimizers from the space $BV_{\mathrm{loc}}(\R^2,\R^N)$ of variational integrals with linear growth} \\[0.5cm]
    {\large Michael Bildhauer \qquad Martin Fuchs}
\end{center}

\vspace*{0.5cm}

\noindent {\bf Abstract.} We consider functions $u : \R^2 \rightarrow \R^N$ from the class $\Bloc$ which locally minimize the relaxed version of a variational integral with a strictly convex density being in addition of linear growth. It turns out that the boundedness of entire local minimizers $u$ implies their constancy.

\vspace*{0.3cm}
\noindent {\bf Mathematics Subject Classification.} 49Q20

\vspace*{0.1cm}
\noindent {\bf Keywords.} variational problems of linear growth, entire solutions, functions of bounded variation, twodimensional problems, Liouville's theorem.

\section{Introduction}

For domains $\Omega \subset \R^2$ and functions $u : \R^2 \rightarrow \R^N$, $N \ge 1$, we look at the variational integral
\begin{equation}
    J[u, \Omega] := \int_{\Omega} f(\nabla u) \, \mathrm{d}x,
\end{equation}
where the density $f : \R^{2N} \rightarrow [0,\infty)$, $\R^{2N}$ the space of $(2 \times N)$-matrices, is assumed to be strictly convex and of linear growth in the following sense: it holds
\begin{equation}
    a|p| - b \le f(p) \le A|p| + B, \quad p \in \R^{2N},
\end{equation}
\begin{equation}
    D^2 f(p)(q,q) > 0, \quad p,q \in \R^{2N} - \{0\},
\end{equation}
and
\begin{equation}
    \nabla f(0) = 0, \quad |D^2 f| \in L^\infty(\R^{2N}),
\end{equation}
where $a, b, A, B$ denote positive constants. Clearly (1.1) is well-defined for functions $u$ from the local Sobolev space $\Wloc^{1,1}(\R^2, \R^N)$ (see, e.g., [1] for a definition), however, a more natural class for the functional $J$ and its lower semicontinuous relaxation $\hat{J}$ is the class $\Bloc(\R^2, \R^N)$ consisting of all functions $u \in \Lloc^1(\R^2, \R^N)$ with locally finite total variation introduced for instance in [2], [3] and [4], where the interested reader will also find the following technical details: given a function $u \in \Bloc(\R^2, \R^N)$ we decompose (compare [3], Theorem 2.22 on page 54) the vector measure $\nabla u$ with respect to Lebesgue's measure $\Lmeasure^2$, i.e. we write
\begin{equation}
    \nabla u = \nablaA u \, \Lmeasure^2 + \nablaS u
\end{equation}
with $\nablaA u \in \Lloc^1(\R^2, \R^{2N})$ and $\nablaS u$ denoting the singular part, which takes the form
\begin{equation}
    \nablaS u = \xi |\nablaS u|
\end{equation}
with $\xi : \R^2 \rightarrow \R^{2N}$ locally integrable with respect to the measure $|\nablaS u|$, and even more precisely it holds $|\xi| = 1$ $|\nablaS u|$-almost everywhere. Using the common symbol $\frac{\nablaS u}{|\nablaS u|}$ in place of $\xi$, we get from (1.5) and (1.6) the familiar formula
\begin{equation}
    \nabla u = \nablaA u \, \Lmeasure^2 + \frac{\nablaS u}{|\nablaS u|} |\nablaS u|.
\end{equation}
Finally, quoting Theorem 5.47 on page 304 in [3], we see that a natural extension of the functional $J$ from (1.1) to the space $\Bloc(\R^2, \R^N)$ is given by
\begin{equation}
    \hat{J}[u, \Omega] := \int_{\Omega} f(\nablaA u) \, \mathrm{d}x + \int_{\Omega} f_\infty \left( \frac{\nablaS u}{|\nablaS u|} \right) \mathrm{d}|\nablaS u|,
\end{equation}
where
$$ f_\infty(p) := \lim_{t \rightarrow \infty} \frac{1}{t} f(tp), \quad p \in \R^{2N}, $$
is the recession function of $f$ and $\Omega$ denotes a planar domain. We introduce the following notation:

\begin{definition}
A function $u \in \Bloc(\R^2, \R^N)$ is an entire local minimizer of the functional $\hat{J}$ from (1.8), iff
\begin{equation}
    \hat{J}[u, \Omega] \le \hat{J}[v, \Omega]
\end{equation}
holds for all bounded domains $\Omega \subset \R^2$ and any $v \in \Bloc(\R^2, \R^N)$ such that $u = v$ outside of a compact subset of $\Omega$.
\end{definition}

It turns out that the Liouville property is true for entire local $\hat{J}$-minimizers $u$, more precisely we have:

\begin{theorem}
Let (1.2)--(1.4) hold for the density $f$ and consider an entire local $\hat{J}$-minimizer $u \in \Bloc(\R^2, \R^N)$ as in Definition 1.1. Then, if $u$ is bounded, the constancy of $u$ follows.
\end{theorem}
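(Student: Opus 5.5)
We approximate $u$ by minimizers of regularized problems, exploit the two-dimensionality through a logarithmic cut-off, and pass to the limit. Fix a large radius $R$ and let $B_{R}$ be the disc of radius $R$ centred at $0$. Following the standard approach for linear growth problems, we replace $f$ by $f_\delta(p) := f(p) + \frac{\delta}{2}|p|^2$. On $B_{2R}$ we take $u_\delta$ to be the unique minimizer of $\int_{B_{2R}} f_\delta(\nabla w)\,dx$ with boundary values given by a mollification $u_\varepsilon$ of $u$; we choose $\delta = \delta(\varepsilon)$ so that $\delta\int_{B_{2R}}|\nabla u_\varepsilon|^2 \to 0$. The maximum principle for convex functionals depending only on $\nabla w$, in the form of a convex-hull property, gives $\sup|u_\delta| \le \sup|u| =: M$. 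By (1.3) and (1.4), $u_\delta \in W^{2,2}_{\mathrm{loc}}$, and it satisfies the differentiated Euler equation
$$\int D^2 f_\delta(\nabla u_\delta)(\partial_\gamma \nabla u_\delta, \nabla \varphi)\,dx = -\int Df_\delta(\nabla u_\delta):\nabla\partial_\gamma\varphi\,dx .$$

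The core estimate is a Caccioppoli-type inequality without squared gradients. We test the Euler equation $\int Df_\delta(\nabla u_\delta):\nabla\varphi = 0$ with $\varphi = \eta^2 u_\delta$. Using $\nabla f(0)=0$ together with convexity, $Df(p):p \ge f(p)-f(0)$, and $|Df|\le C$ (from linear growth and convexity), this yields
$$\int \eta^2 \bigl(f(\nabla u_\delta)-f(0)\bigr) + \delta\int\eta^2|\nabla u_\delta|^2 \le C M \int \eta|\nabla\eta|\,\bigl(1+\delta|\nabla u_\delta|\bigr).$$
This alone only gives $\int_{B_r}|\nabla u_\delta| \lesssim M r$, which is not strong enough. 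We therefore also use the quantity $\sigma_\delta := Df_\delta(\nabla u_\delta)$, which is bounded up to the $\delta$-term, and the weighted second-order estimate
$$\int \eta^2 D^2 f_\delta(\nabla u_\delta)(\partial_\gamma\nabla u_\delta,\partial_\gamma\nabla u_\delta) \le C\int |\nabla\eta|^2\,|D^2 f_\delta|\,|\nabla u_\delta|^2 .$$
That estimate is also problematic. The better route is the Liouville-type energy estimate for the "stress" combined with the boundedness of $u$: test with $\varphi = \eta^2(u_\delta - \bar u)$ and bound $\int \eta^2 D^2 f_\delta(\nabla u_\delta)(\nabla u_\delta,\nabla u_\delta)$. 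Alternatively, test the Euler equation with $\eta^2 u_\delta$ and obtain
$$\int \eta^2\, Df_\delta(\nabla u_\delta):\nabla u_\delta \le 2M\int \eta\,|\nabla\eta|\,|Df_\delta(\nabla u_\delta)| .$$
In dimension two we choose the logarithmic cut-off $\eta(x) = \log(R^2/|x|)/\log R$ on $R\le|x|\le R^2$, with $\eta=1$ inside and $\eta=0$ outside. Then $\int|\nabla\eta|^2 \le C/\log R$. By Cauchy--Schwarz and strict convexity,
$$|Df(p)|^2 \le C\,Df(p):p ,$$
which holds since $Df(0)=0$, $D^2 f$ is bounded, and $Df(p):p \ge c\min(|p|^2,|p|)$ with $|Df|$ bounded. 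We then absorb and obtain
$$\int_{B_R} Df_\delta(\nabla u_\delta):\nabla u_\delta \le \frac{CM^2}{\log R}.$$

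To conclude, we pass $\delta\to 0$. The sequence $u_\delta$ converges in $L^1$ to a $\hat J$-minimizer, and by strict convexity plus uniqueness of the absolutely continuous part (the standard dual-solution argument) we may identify the limit with $u$ up to the usual boundary-value issue on $B_{2R}$. The quantity $Df(\nabla u_\delta):\nabla u_\delta$ controls $\min(|\nabla u_\delta|^2,|\nabla u_\delta|)$, so lower semicontinuity yields $\int_{B_R}\Phi(\nabla u)\le CM^2/\log R$, where $\Phi$ is the convex function with recession behaviour $c|p|$, extended to measures. Letting $R\to\infty$ gives $\nabla u = 0$, hence $u$ is constant.

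The main obstacle is the key inequality $|Df(p)|^2\le C\,Df(p):p$ for large $|p|$. It requires $Df(p):p \ge c > 0$ away from $0$, which follows from strict convexity and $f(p)\ge a|p|-b$. A second obstacle is relating the regularized minimizers on $B_{2R}$, with their own boundary data, to $u$ itself. Here we rely on the known fact that $\hat J$-minimizers with the same boundary trace differ at most by singular parts, and that $\nabla^a u$ is unique. Handling this rigorously via the dual problem is the delicate step. If it fails, we would instead work directly with the dual solution $\sigma = Df(\nabla^a u)$, which is bounded and divergence-free, and run the logarithmic cut-off argument on $\sigma$.
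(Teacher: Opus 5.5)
Your core estimate is the paper's mechanism: you test with $\eta^2 u$, bound $|Df(p)|^2$ by $L\,Df(p):p$, and use that in the plane the cut-off has bounded Dirichlet energy (decaying, for your logarithmic choice). The inequality you call the main obstacle is harmless. Write $Df(p)=A_p p$ with $A_p=\int_0^1 D^2f(tp)\,dt$ symmetric and $0\le A_p\le L:=\sup|D^2f|$; then $|A_pp|^2\le L\,A_pp:p$. Your own justification via $Df(p):p\ge c\min(|p|^2,|p|)$ is not available, because (1.3) allows $D^2f(0)$ to degenerate, as for $f(p)=(1+|p|^4)^{1/4}$.

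The genuine gap is the passage from $u_\delta$ back to $u$. Any $L^1$-limit $w$ of the $u_\delta$ is at best \emph{some} generalized minimizer of the relaxed Dirichlet problem on the ball. Strict convexity only gives $\nabla^a w=\nabla^a u$, and the singular parts of different generalized minimizers need not coincide. So your argument yields at most $\nabla^a u=0$; nothing in it controls $\nabla^s u$, and the closing claim that letting $R\to\infty$ gives $\nabla u=0$ is unsupported.

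There is a second gap for $N\ge 2$. You need $\sup|u_\delta|\le\sup|u|$ on the annulus when testing with $\eta^2u_\delta$, but this is not a general fact for vectorial minimizers of $\int f_\delta(\nabla w)$. The convex hull property needs structure such as $f(p)=g(|p|)$, and the maximum modulus principle fails already for strictly convex quadratic integrands such as the one behind the Lam\'e system. Two smaller points: the logarithmic cut-off lives on $B_{R^2}$, not $B_{2R}$, so you must regularize on the larger ball. And lower semicontinuity needs a convex minorant of $Df(p):p$, for instance $f(p)-f(0)$, since $p\mapsto Df(p):p$ need not be convex.

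The missing idea is that no regularization is needed: vary $u$ itself inside $BV_{\mathrm{loc}}$ by $v=u(1+\varepsilon\eta)$. Then $\nabla^a v=(1+\varepsilon\eta)\nabla^a u+\varepsilon\, u\otimes\nabla\eta$ and $\nabla^s v=(1+\varepsilon\eta)\nabla^s u$. For small $\varepsilon$ we have $1+\varepsilon\eta\ge 0$, so the $1$-homogeneity of $f_\infty$ makes $\hat J[v,\Omega]$ differentiable at $\varepsilon=0$. This gives identity (2.1), where $\int\eta\, f_\infty(\nabla^s u/|\nabla^s u|)\,d|\nabla^s u|$ appears with a good sign next to $\int\eta\,\nabla f(\nabla^a u):\nabla^a u\,dx$.

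Running your Cauchy--Schwarz and cut-off argument on (2.1) with $\eta^2$ in place of $\eta$ uses only $|u|\le M$ and $|D^2f|\le L$, and controls both parts at once. Then $f_\infty(p)\ge\nabla f(p):p>0$ for $p\ne 0$ forces $\nabla^s u=0$ as well. The maximum principle, the mollified boundary data and the identification of limits all disappear. With your logarithmic cut-off this route even gives the decay $CM^2/\log R$ directly, whereas the paper uses a fixed cut-off, a uniform bound and then a tail argument.
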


\begin{remark}
The Liouville property for entire local minimizers of variational integrals or for entire solutions of various types of elliptic equations has been investigated during the past decades by many authors. For an overview we refer to the nice work [5] or to the references given in Section 1 of [6].
\end{remark}

\begin{remark}
Since the density $f$ is of linear growth, one might even ask, if the Bernstein property holds for entire local minimizers $u$ under additional assumptions imposed on $f$, which means that without further hypotheses concerning $u$ the function $u$ must be affine. Actually we have no idea, if this can be achieved, or if non-affine counterexamples exist.
\end{remark}

Going through the proof of Theorem 1.2 presented in Section 2 we deduce:

\begin{corollary}
Under the assumptions (1.2), (1.3) and (1.4) concerning the density $f$ consider an entire local $\hat{J}$-minimizer $u \in \Bloc(\R^2, \R^N)$ in the sense of Definition 1.1. Then, if $\nablaA u = 0$ holds, $u$ must be a constant function.
\end{corollary}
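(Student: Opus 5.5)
The plan is to test the local minimality of $u$ with a multiplicative variation that shrinks the singular part of $\nabla u$. The point is that, once $\nablaA u = 0$, this variation costs only an amount quadratic in the parameter. No global information on $u$ will be needed, so boundedness plays no role.

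Fix $\eta \in C^\infty_0(\R^2)$ with $0 \le \eta \le 1$, and let $\Omega$ be a ball containing $\operatorname{spt}\eta$. For $0 < t < 1$ put
\[
v_t := (1 - t\eta)\,u .
\]
By the embedding $BV_{\mathrm{loc}} \subset L^2_{\mathrm{loc}}$ in two dimensions we have $u \in L^2_{\mathrm{loc}}$. The product rule for a $BV$ function times a smooth function gives $v_t \in \Bloc(\R^2,\R^N)$ with
\[
\nabla v_t = (1-t\eta)\nabla u - t\, u \otimes \nabla \eta \, \Lmeasure^2 .
\]
Since $\nablaA u = 0$, the decomposition (1.7) yields
\[
\nablaA v_t = -t\, u\otimes\nabla\eta, \qquad \nablaS v_t = (1-t\eta)\nablaS u .
\]
The densities $\frac{\nablaS v_t}{|\nablaS v_t|}$ and $\frac{\nablaS u}{|\nablaS u|}$ agree $|\nablaS u|$-a.e. on $\{t\eta<1\}$, and $f_\infty$ is positively $1$-homogeneous. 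Also $v_t = u$ outside $\operatorname{spt}\eta$, so $v_t$ is admissible in Definition 1.1.

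The minimality $\hat J[u,\Omega] \le \hat J[v_t,\Omega]$ then reads
\[
t\int_\Omega \eta\, f_\infty\Bigl(\frac{\nablaS u}{|\nablaS u|}\Bigr)\,\mathrm{d}|\nablaS u|
\le \int_\Omega \bigl[f(-t\,u\otimes\nabla\eta) - f(0)\bigr]\,\mathrm{d}x .
\]
By (1.4), i.e. $\nabla f(0)=0$ and $|D^2f|$ bounded, Taylor's formula gives $f(p)-f(0) \le c|p|^2$. Hence the right-hand side is at most
\[
c\,t^2 \int_\Omega |u|^2 |\nabla\eta|^2\,\mathrm{d}x ,
\]
which is finite since $u\in L^2_{\mathrm{loc}}$. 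Dividing by $t$ and letting $t\downarrow 0$ we get
\[
\int \eta\, f_\infty\Bigl(\frac{\nablaS u}{|\nablaS u|}\Bigr)\,\mathrm{d}|\nablaS u| \le 0 .
\]

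On the other hand, (1.2) implies $f_\infty(p) \ge a|p|$, so $f_\infty\bigl(\frac{\nablaS u}{|\nablaS u|}\bigr) \ge a > 0$ $|\nablaS u|$-a.e. Therefore $|\nablaS u|(\{\eta>0\}) = 0$. Since $\eta$ was arbitrary, $\nablaS u = 0$, and together with $\nablaA u=0$ this gives $\nabla u = 0$ as a measure on the connected set $\R^2$. Thus $u$ is constant.

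The step needing the most care is the computation of $\nabla v_t$, in particular that the singular part of $(1-t\eta)u$ is exactly $(1-t\eta)\nablaS u$ with the same polar density. For a smooth factor $\eta$ this follows from the standard product rule for $BV$ functions in [3]. The integrability of $|u|^2|\nabla\eta|^2$ is guaranteed by the Sobolev embedding of $BV$ in dimension two, and this is what makes the quadratic bound on the right-hand side usable.
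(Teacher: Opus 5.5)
Your argument is correct and is essentially the paper's: the variation $(1-t\eta)u$ is the one used to derive (2.1), and $\nabla f(0)=0$ removes the absolutely continuous contribution at first order, leaving $\int\eta\, f_\infty\bigl(\frac{\nablaS u}{|\nablaS u|}\bigr)\,\mathrm{d}|\nablaS u|$, which forces $\nablaS u=0$. The differences are minor. You keep a one-sided inequality with $\eta\ge 0$ and a quadratic Taylor bound, whereas the paper passes to the full Euler--Lagrange equation; the $L^2_{\mathrm{loc}}$ embedding is not actually needed, since $|\nabla f|\le A$ and dominated convergence already work with $u\in L^1_{\mathrm{loc}}$. You also get positivity of $f_\infty$ from the lower bound in (1.2), where the paper uses (1.3).
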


\noindent In fact, from equation (2.1) combined with $\nabla f(\nablaA u) = \nabla f(0) = 0$ (see (1.4)) we deduce
$$ \int_{\Omega} \eta \, f_\infty \left( \frac{\nablaS u}{|\nablaS u|} \right) \mathrm{d}|\nablaS u| = 0 $$
valid for any open set $\Omega \subset \R^2$ and all $\eta \in C_0^1(\Omega)$ yielding $\nablaS u = 0$ and in conclusion $\nabla u = 0$.

\vspace*{0.3cm}
Our paper is organized as follows: Section 2 presents the proof of Theorem 1.2, and from the methods used there we obtain a kind of uniqueness result for entire local minimizers (see Theorem 2.1). Section 3 discusses the Liouville property under one-sided conditions.

\section{Proof of Theorem 1.2}

Let assumptions (1.2)--(1.4) hold for the density $f$ and consider a local $\hat{J}$-minimizer $u \in \Bloc(\R^2, \R^N)$ in the sense of (1.9). We choose
$$ \Omega := B_{2R} := \{ x \in \R^2 : |x| < 2R \}, \quad R > 0, $$
$\eta \in C_0^1(\Omega)$, $\varepsilon > 0$, and define $v := u(1 + \varepsilon \eta)$. It holds $v \in \Bloc(\R^2, \R^N)$, $\mathrm{spt}(u - v) \subset\subset \Omega$ and
\begin{align*}
\nablaA v &= (1 + \varepsilon \eta) \nablaA u + \varepsilon u \otimes \nabla \eta, \\
\nablaS v &= (1 + \varepsilon \eta) \nablaS u,
\end{align*}
$u \otimes \nabla \eta$ denoting the tensor product. Using (1.8) we obtain from (1.9)
\begin{align*}
    \int_{\Omega} f(\nablaA u) \, \mathrm{d}x &- \int_{\Omega} f(\nablaA u + \varepsilon [ \eta \nablaA u + u \otimes \nabla \eta ]) \, \mathrm{d}x \\
    &+ \int_{\Omega} f_\infty\left(\frac{\nablaS u}{|\nablaS u|}\right)\mathrm{d}|\nablaS u| - \int_{\Omega} (1 + \varepsilon \eta) f_\infty\left(\frac{\nablaS u}{|\nablaS u|}\right)\mathrm{d}|\nablaS u| \le 0.
\end{align*}
Of course this inequality is true only in the case that $1 + \varepsilon \eta \ge 0$ holds, since then we have $|(1 + \varepsilon \eta) \nablaS u| = (1 + \varepsilon \eta)|\nablaS u|$. However, the latter inequality is true for any $\varepsilon \in (0, \varepsilon(\eta)]$ with sufficiently small $\varepsilon(\eta)$. Dividing by $\varepsilon > 0$ and passing to the limit $\varepsilon \rightarrow 0$ it follows ($:$ denoting the scalar product of matrices)
$$ - \int_{\Omega} \nabla f(\nablaA u) : [ \eta \nablaA u + u \otimes \nabla \eta ] \, \mathrm{d}x - \int_{\Omega} \eta \, f_\infty\left(\frac{\nablaS u}{|\nablaS u|}\right)\mathrm{d}|\nablaS u| \le 0, $$
and since the sign of $\eta \in C_0^1(\Omega)$ is arbitrary, we get
\begin{equation}
    \int_{\Omega} \nabla f(\nablaA u) : [ \eta \nablaA u + u \otimes \nabla \eta ] \, \mathrm{d}x + \int_{\Omega} \eta \, f_\infty\left(\frac{\nablaS u}{|\nablaS u|}\right)\mathrm{d}|\nablaS u| = 0
\end{equation}
valid for any $\eta \in C_0^1(\Omega)$. Let us choose $\eta$ such that $0 \le \eta \le 1$ together with compact support in $B_{2R}$ and additionally $\eta = 1$ on $B_R$ and $|\nabla \eta| \le \frac{c}{R}$ for an absolute constant. We replace $\eta$ by $\eta^2$ in (2.1) and find
\begin{equation}
    \int_{B_{2R}} \eta^2 \nabla f(\nablaA u) : \nablaA u \, \mathrm{d}x + \int_{B_{2R}} \eta^2 f_\infty\left(\frac{\nablaS u}{|\nablaS u|}\right)\mathrm{d}|\nablaS u| = -2 \int_{B_{2R}} \eta \nabla f(\nablaA u) : (u \otimes \nabla \eta) \, \mathrm{d}x.
\end{equation}
Recalling (1.4) and introducing the parameter dependent bilinear form
$$ B_{t,x}(p,q) := D^2 f(t \nablaA u(x))(p,q),\quad 0 \le t \le 1,  x \in B_{2R},  p,q \in \R^{2N}, $$equation (2.2) implies \begin{align}
\int_{B_{2R}} \int_{0}^{1} \eta^2 B_{t,x}(\nablaA u, \nablaA u)  \mathrm{d}t  \mathrm{d}x &+ \int_{B_{2R}} \eta^2 f_\infty\left(\frac{\nablaS u}{|\nablaS u|}\right)\mathrm{d}|\nablaS u| \nonumber \\
&= -2 \int_{B_{2R}} \int_{0}^{1} B_{t,x}(\eta \nablaA u, u \otimes \nabla \eta)  \mathrm{d}t  \mathrm{d}x.
\end{align}Recalling (1.3), the Cauchy-Schwarz inequality applies to $B_{t,x}$, which means$$ |B_{t,x}(\eta \nablaA u, u \otimes \nabla \eta)| \le B_{t,x}(\eta \nablaA u, \eta \nablaA u)^{1/2} B_{t,x}(u \otimes \nabla \eta, u \otimes \nabla \eta)^{1/2}, $$and after an application of Hölder's inequality to the quantity$$ \int_{\mathrm{spt}(\nabla \eta)} \int_{0}^{1} |B_{t,x}(\eta \nablaA u, u \otimes \nabla \eta)|  \mathrm{d}t  \mathrm{d}x $$(having inserted the latter estimate), we get from (2.3)\begin{align}
\int_{B_{2R}} \int_{0}^{1} \eta^2 B_{t,x}(\nablaA u, \nablaA u)  \mathrm{d}t  \mathrm{d}x &+ \int_{B_{2R}} \eta^2 f_\infty\left(\frac{\nablaS u}{|\nablaS u|}\right)\mathrm{d}|\nablaS u| \nonumber \\
&\le c \left( \int_{\mathrm{spt}(\nabla \eta)} \int_{0}^{1} \eta^2 B_{t,x}(\nablaA u, \nablaA u)  \mathrm{d}t  \mathrm{d}x \right)^{1/2} \nonumber \\
&\quad \cdot \left( \int_{\mathrm{spt}(\nabla \eta)} \int_{0}^{1} B_{t,x}(u \otimes \nabla \eta, u \otimes \nabla \eta)  \mathrm{d}t  \mathrm{d}x \right)^{1/2}.\end{align}Recalling the boundedness of $|D^2 f|$ (see (1.4)) and observing that in the situation of Theorem 1.2 we assume that $u \in L^\infty(\R^2, \R^N)$, we see that$$ \int_{\mathrm{spt}(\nabla \eta)} \int_{0}^{1} B_{t,x}(u \otimes \nabla \eta, u \otimes \nabla \eta)  \mathrm{d}t  \mathrm{d}x $$can be bounded by a finite constant independent of $R$. Applying Young's inequality to the right-hand side of (2.4), we therefore deduce for a finite constant $K$\begin{equation}\int_{B_R} \int_{0}^{1} B_{t,x}(\nablaA u, \nablaA u)  \mathrm{d}t  \mathrm{d}x + \int_{B_R} f_\infty\left(\frac{\nablaS u}{|\nablaS u|}\right)\mathrm{d}|\nablaS u| \le K\end{equation}valid for any $R > 0$. Here we note that $f_\infty \ge 0$, which follows from \begin{align*}
\frac{1}{t} f(tp) &\ge \frac{1}{t} [f(p) + \nabla f(p) : (tp - p)] \\
&\rightarrow p : \nabla f(p) \quad \text{as } t \rightarrow \infty
\end{align*}for any $p \in \R^{2N}$, and we have $p : \nabla f(p) \ge 0$ by (1.3) and (1.4). Returning to (2.4) and applying (2.5) with the consequence that$$ \int_{B_{2R}-B_R} \int_{0}^{1} B_{t,x}(\nablaA u, \nablaA u)  \mathrm{d}t  \mathrm{d}x \rightarrow 0 \quad \text{as } R \rightarrow \infty, $$we end up with\begin{equation}\int_{\R^2} \int_{0}^{1} B_{t,x}(\nablaA u, \nablaA u)  \mathrm{d}t  \mathrm{d}x + \int_{\R^2} f_\infty\left(\frac{\nablaS u}{|\nablaS u|}\right)\mathrm{d}|\nablaS u| = 0.\end{equation}Finally, we use property (1.3) and obtain from (2.6) that $\nablaA u$ must vanish, whereas from$$ \int_{\R^2} f_\infty\left(\frac{\nablaS u}{|\nablaS u|}\right)\mathrm{d}|\nablaS u| = 0 $$combined with the foregoing estimate (observe (1.4))$$ f_\infty(p) \ge p \cdot \nabla f(p) = \int_{0}^{1} D^2 f(tp)(p,p) \mathrm{d}t $$implying that $f_\infty(p) > 0$ for $p \in \R^{2N} - \{0\}$ (see (1.3)), it follows that also the singular part $\nablaS u$ is identically zero. Altogether it is shown that $\nabla u = 0$ and the constancy of $u$ follows. $\square$\vspace*{0.3cm}\\An inspection of the above arguments shows\begin{theorem}Let the density $f$ satisfy (1.2)--(1.4) and consider entire local $\hat{J}$-minimizers $u_1, u_2 \in \Bloc(\R^2, \R^N)$ in the sense of Definition 1.1. Suppose that $u_1 - u_2 \in L^\infty(\R^2, \R^N)$ and that $u_i \in C^2(\R^2, \R^N)$ for at least one index $i$. Then we have the constancy of $u_1 - u_2$, provided $f(p) = g(|p|)$ with $g \in C^2([0,\infty))$.\end{theorem}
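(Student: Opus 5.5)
We prove Theorem 2.1 by mimicking the proof of Theorem 1.2 with a difference of two Euler equations. Let $w := u_1 - u_2 \in L^\infty$, and suppose without loss of generality that $u_2 \in C^2(\R^2,\R^N)$; the case $i=1$ is symmetric.

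\emph{Step 1: Euler equations.} We use the variations $u_1 - \varepsilon\eta w$ for $u_1$ and $u_2 + \varepsilon\eta w$ for $u_2$, with $\eta\in C^1_0(B_{2R})$ and $0\le\eta\le 1$. Both are admissible BV-competitors. Since $u_2$ is smooth, $\nablaS w = \nablaS u_1$ and $\nablaA w = \nablaA u_1 - \nabla u_2$. For $u_2$ the variation is smooth in the absolutely continuous part and adds the singular part $\varepsilon\eta\nablaS u_1$. The one-sided inequality for $\varepsilon\downarrow 0$ gives
$$\int \nabla f(\nabla u_2):\nabla(\eta w)^a\,dx+\int \eta f_\infty\Big(\tfrac{\nablaS u_1}{|\nablaS u_1|}\Big)d|\nablaS u_1|\ge 0 .$$
The singular directional derivative here is $f_\infty(\xi)$ rather than a linear term, because $f_\infty$ is the derivative of $f$ at infinity. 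Actually this term should come from $\nabla f(\nabla u_2)$ paired with $\nablaS u_1$, by convexity $f(p+q)\ge f(p)+\nabla f(p):q$. The radial structure $f(p)=g(|p|)$ gives $f_\infty(p)=g_\infty|p|$, with $g_\infty := \lim_{t\to\infty} g(t)/t$, and $|\nabla f(p)|\le g_\infty$. Hence
$$\nabla f(\nabla u_2):\xi\le f_\infty(\xi),$$
and this is where the hypothesis $f(p)=g(|p|)$ enters.

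For $u_1$ the computation is as in (2.1) but in the direction $-\eta w$. Since $\nablaS(\eta w) = \eta\nablaS u_1$, the variation scales the singular part by $(1-\varepsilon\eta)$, so the derivative is exact and gives
$$\int\nabla f(\nablaA u_1):\nablaA(\eta w)\,dx+\int\eta f_\infty(\xi)\,d|\nablaS u_1|\le 0 .$$
Alternatively, since $u_2$ is $C^2$, integration by parts against $\operatorname{div}\nabla f(\nabla u_2)=0$ turns the $u_2$ equation into a distributional identity tested with the BV function $\eta w$.

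\emph{Step 2: subtract.} Subtracting the two relations yields
$$\int \eta\,[\nabla f(\nablaA u_1)-\nabla f(\nabla u_2)]:\nablaA w\,dx+\int\eta\,[f_\infty(\xi)-\nabla f(\nabla u_2):\xi]\,d|\nablaS u_1| \le -\int [\nabla f(\nablaA u_1)-\nabla f(\nabla u_2)]:(w\otimes\nabla\eta)\,dx .$$
Both terms on the left are nonnegative. The first is nonnegative by convexity. We write it as $\int\eta\int_0^1 \tilde B_{t,x}(\nablaA w,\nablaA w)\,dt$, where
$$\tilde B_{t,x}:=D^2f(\nabla u_2+t\nablaA w)$$
replaces $B_{t,x}$. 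The second is nonnegative by Step 1.

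\emph{Step 3: Caccioppoli and exhaustion.} We replace $\eta$ by $\eta^2$ and write the right-hand side as $\int_0^1\tilde B_{t,x}(\eta\nablaA w, w\otimes\nabla\eta)\,dt$. Applying Cauchy--Schwarz for $\tilde B$, Hölder and Young exactly as in (2.4)--(2.5) uses $|D^2f|\le c$, $|w|\le M$ and $\int|\nabla\eta|^2\le c$; the last bound is where dimension two is essential. This bounds the left side on $B_R$ uniformly in $R$. Feeding this back in, the right side tends to zero, since it is controlled by the left side on the annulus $B_{2R}-B_R$. Hence both nonnegative integrals vanish on $\R^2$.

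Strict convexity (1.3) gives $\nablaA w=0$ a.e. For the singular part we need strictness: $f_\infty(\xi)>\nabla f(\nabla u_2):\xi$ whenever $|\xi|=1$, i.e. $|\nabla f(q)|<g_\infty$ for every $q$. Since $|\nabla f(q)|=g'(|q|)$, this follows because $g'$ is strictly increasing, by (1.3), with limit $g_\infty$. Hence $\nablaS w=0$, and $w$ is constant.

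The main obstacle is Step 1, namely deriving a correct variational inequality for the nonsmooth minimizer $u_1$ when tested with $w$, whose singular part need not be a multiple of $\nablaS u_1$. Here smoothness of $u_2$ is crucial: it ensures $\nablaS w=\nablaS u_1$, so the multiplicative-variation trick from the proof of Theorem 1.2 still works. The radial structure then controls the cross term $\nabla f(\nabla u_2):\xi$ by $f_\infty(\xi)$ with strict inequality.
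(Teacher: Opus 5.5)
Your argument is correct, but it is organized differently from the paper's. The paper proceeds in two passes. First it subtracts the variational inequalities (2.8) and (2.11); there the $f_\infty$-terms cancel exactly, which yields (2.12) and hence $\nablaA u_1=\nabla u_2$. Only afterwards does it return to the Euler equation (2.1) for $u_1$, insert $\operatorname{div}\nabla f(\nabla u_2)=0$ via the BV Gauss theorem, and reach $g'(\infty)=\nabla f(\nabla u_2)\cdot\xi_1$ $|\nablaS u_1|$-a.e., which contradicts $g'(t)<g'(\infty)$.

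You instead pair the $u_1$ inequality directly with the identity $\int\nabla f(\nabla u_2):\nablaA(\eta w)\,dx+\int\eta\,\nabla f(\nabla u_2):\xi_1\,d|\nablaS u_1|=0$. The nonnegative excess $f_\infty(\xi_1)-\nabla f(\nabla u_2):\xi_1$ then survives in the Caccioppoli inequality and is killed by the same exhaustion that kills $\nablaA w$.

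What your route buys is a shorter proof that uses neither evenness nor the radial form of $f$. With $\eta\ge 0$ you only need $f_\infty(p)>\nabla f(q):p$ for $p\neq0$. This holds under (1.2)--(1.4) alone, because $t\mapsto\nabla f(q+tp):p$ is strictly increasing with limit $f_\infty(p)$ (compare Remark 2.2). So your remark that this is where $f(p)=g(|p|)$ enters overstates its role.

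What the paper's split buys is the observation that $\nablaA u_1=\nabla u_2$ follows from the variational inequalities alone. That step needs no continuity of $\nabla f(\nabla u_2)$ and no testing of the Euler equation of $u_2$ against BV functions; the regularity of $u_2$ is used only for the singular part.

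One point of exposition needs fixing. Step 2 does not follow from the variational inequality for $u_2$ that you display first: subtracting it from the $u_1$ inequality cancels the $f_\infty$-terms and only reproduces (2.12). The relation you actually subtract is the Gauss--Green identity above. It is valid because $\nabla f(\nabla u_2)\in C^1$ is divergence free, $\eta w\in BV$ has compact support, and $D(\eta w)=\nablaA(\eta w)\,\mathcal{L}^2+\eta\,\xi_1|\nablaS u_1|$. State this identity explicitly, and remove the sentence suggesting that the $f_\infty$-term ``should come from'' $\nabla f(\nabla u_2)$ paired with $\nablaS u_1$.
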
Clearly we obtain Theorem 1.2 by just letting $u_1 = u$ and $u_2 = 0$. In the general case we assume $u_2 \in C^2(\R^2, \R^N)$ and observe first\begin{equation}\hat{J}[u_1, \Omega] \le \hat{J}[u_1 + \varepsilon \eta (u_2 - u_1), \Omega],\end{equation}where $\Omega := B_{2R}$, $\eta \in C_0^1(\Omega)$ and $\varepsilon > 0$ is chosen to satisfy $1 - \varepsilon \eta \ge 0$. We have \begin{align*}
\nablaA [u_1 + \varepsilon \eta (u_2 - u_1)] &= \nablaA u_1 + \varepsilon \eta (\nabla  u_2 - \nablaA u_1) + \varepsilon (u_2 - u_1) \otimes \nabla \eta, \\
\nablaS [u_1 + \varepsilon \eta (u_2 - u_1)] &= (1 - \varepsilon \eta) \nablaS u_1.
\end{align*}From (2.7) we get along the lines leading to (2.1)\begin{align}
- \int_{\Omega} \nabla f(\nablaA u_1) &: [ \eta (\nabla u_2 - \nablaA u_1) + (u_2 - u_1) \otimes \nabla \eta ]  \mathrm{d}x \nonumber \\
&+ \int_{\Omega} \eta f_\infty\left(\frac{\nablaS u_1}{|\nablaS u_1|}\right)\mathrm{d}|\nablaS u_1| = 0.
\end{align} Second we use\begin{equation}\hat{J}[u_2, \Omega] = J[u_2, \Omega] \le \hat{J}[u_2 + \varepsilon \eta (u_2 - u_1), \Omega]\end{equation}and obtain from (2.9) after dividing by $\varepsilon > 0$ and passing to the limit $\varepsilon \rightarrow 0$ \begin{align}
- \int_{\Omega} \nabla f(\nabla u_2) &: [ \eta (-\nablaA u_1 + \nabla u_2) + (u_1 - u_2) \otimes \nabla \eta ] \, \mathrm{d}x \nonumber \\
&\le \int_{\Omega} f_\infty(\xi) \, \mathrm{d}|\nablaS( \eta u_1)|, \quad \xi := \frac{\nablaS(-\eta u_1)}{|\nablaS(-\eta u_1)|}.
\end{align}Since $f$ is an even function, the right-hand side of (2.10) equals$$ \int_{\Omega} f_\infty\left(\frac{\nablaS u_1}{|\nablaS u_1|}\right) |\eta|  \mathrm{d}|\nablaS u_1|, $$and if we replace $\eta$ by $-\eta$ in (2.10), we find for any $\eta \in C_0^1(\Omega)$ \begin{align}
\int_{\Omega} \nabla f(\nabla u_2) &: [ \eta (\nabla u_2 - \nablaA u_1) + (u_2 - u_1) \otimes \nabla \eta ] \, \mathrm{d}x \nonumber \\
&\le \int_{\Omega} f_\infty\left(\frac{\nablaS u_1}{|\nablaS u_1|}\right) |\eta| \, \mathrm{d}|\nablaS u_1|.
\end{align}Consider now $\eta$ as specified after (2.1), replace $\eta$ by $\eta^2$ in (2.8) and (2.11) and finally add (2.8) to (2.11) with the result \begin{align}
\int_{\Omega} (\nabla f(\nabla u_2) - \nabla f(\nablaA u_1)) &: (\nabla u_2 - \nablaA u_1) \eta^2 \, \mathrm{d}x \nonumber \\
&+ \int_{\Omega} (\nabla f(\nabla u_2) - \nabla f(\nablaA u_1)) &: (u_2 - u_1) \otimes \nabla \eta^2 \, \mathrm{d}x \le 0.
\end{align}We replace the bilinear form $B_{t,x}(p,q)$ by ($0 \le t \le 1, x \in \Omega$)$$ \widetilde{B}_{t,x}(p,q) := D^2 f(\nablaA u_1(x) + t(\nabla u_2(x) - \nablaA u_1(x)))(p,q) $$and proceed as done after (2.3) starting from (2.12) using the boundedness of $u_2 - u_1$ and get $\nabla u_2 = \nablaA u_1$. Up to now we have not used the structure condition $f(p) = g(|p|)$, and the assumption $u_2 \in C^2(\R^2, \R^N)$ is not needed for proving $\nablaA u_1 = \nabla u_2$. Both hypotheses will enter the discussion whether $\nablaS u_1 = 0$ holds. For notational simplicity we consider the case $N = 1$ and apply equation (2.1) for the choice $u := u_1$ replacing $\nablaA u_1$ by $\nabla u_2 \in C^1(\R^2, \R^2)$ with the result\begin{equation}\int_{\Omega} \nabla f(\nabla u_2) \cdot \nabla u_2  \eta \mathrm{d}x + \int_{\Omega} \nabla f(\nabla u_2) \cdot \nabla \eta  u_1  \mathrm{d}x + \int_{\Omega} \eta  f_\infty(\xi_1)  \mathrm{d}|\nablaS u_1| = 0,\end{equation}where we have set $\xi_1 := \frac{\nablaS u_1}{|\nablaS u_1|}$ and $\Omega, \eta$ are as in (2.1). It holds\begin{align*}
f_\infty(p) &= g'(\infty)|p|, \quad p \in \mathbb{R}^2, \\
g'(\infty) &:= \lim_{t \rightarrow \infty} g'(t) > 0.
\end{align*} hence we obtain from (2.13), using the equation$$ \mathrm{div}(\nabla f(\nabla u_2)) = 0 $$valid for the function $u_2$, the identity \begin{align}
0 &= \int_{\Omega} \eta g'(\infty) \, \mathrm{d}|\nablaS u_1| + \int_{\Omega} \operatorname{div}[\nabla f(\nabla u_2) \eta] u_1 \, \mathrm{d}x + \int_{\Omega} \nabla f(\nabla u_2) \cdot \nabla u_2 \eta \, \mathrm{d}x \nonumber \\
&= \int_{\Omega} \eta g'(\infty) \, \mathrm{d}|\nablaS u_1| - \int_{\Omega} \nabla f(\nabla u_2) \eta \cdot \mathrm{d}\nabla u_1 + \int_{\Omega} \nabla f(\nabla u_2) \cdot \nabla u_2 \eta \, \mathrm{d}x,
\end{align}where we have made use of the Gauss theorem in the space $BV$ (see, e.g., [2], Theorem 2.10 on page 34). It holds (again on account of $\nablaA u_1 = \nabla u_2$)\begin{align*}
\int_{\Omega} \nabla f(\nabla u_2) \eta \cdot \mathrm{d}\nabla u_1 &= \int_{\Omega} \nabla f(\nabla u_2) \cdot \nablaA u_1 \eta \, \mathrm{d}x + \int_{\Omega} \nabla f(\nabla u_2) \cdot \xi_1 \eta \, \mathrm{d}|\nablaS u_1| \\
&= \int_{\Omega} \nabla f(\nabla u_2) \cdot \nabla u_2 \eta \, \mathrm{d}x + \int_{\Omega} \nabla f(\nabla u_2) \cdot \xi_1 \eta \, \mathrm{d}|\nablaS u_1|,
\end{align*}
hence (2.14) finally gives
\[\int_{\Omega} \eta g'(\infty) \, \mathrm{d}|\nablaS u_1| = \int_{\Omega} \eta \nabla f(\nabla u_2) \cdot \xi_1 \, \mathrm{d}|\nablaS u_1|\] for any domain $\Omega$ in $\R^2$ and functions $\eta \in C_0^1(\Omega)$, and therefore $g'(\infty) = \nabla f(\nabla u_2) \cdot \xi_1$ $|\nablaS u_1|$-almost everywhere. Suppose that there exists a set $E$ such that $|\nablaS u_1|(E) > 0$. If $\nabla u_2 = 0$ $|\nablaS u_1|$-almost everywhere on $E$, we obtain the contradiction $g'(\infty) = 0$. For points $x \in E$ with $\nabla u_2(x) \neq 0$ we have$$ g'(\infty) = g'(|\nabla u_2(x)|) \xi_1(x) \cdot \frac{\nabla u_2(x)}{|\nabla u_2(x)|}, $$and in conclusion $g'(\infty) \le g'(|\nabla u_2(x)|)$ again leading to a contradiction. This finally shows that $u_1 - u_2$ is constant. $\square$\begin{remark}For the interested reader we note that the result of Theorem 2.1 is not necessarily based on the structure condition $f(p) = g(|p|)$, $p \in \R^{2N}$. Actually it holds, if the density $f$ is an even function with the properties (1.2)--(1.4). In fact, exactly the same calculations (see (2.14) and compare the discussion after this equation) will lead to the identity\begin{equation}f_\infty\left(\frac{\nablaS u_1}{|\nablaS u_1|}\right) = \nabla f(\nabla u_2) \cdot \frac{\nablaS u_1}{|\nablaS u_1|}\end{equation}for $|\nablaS u_1|$-almost all points. Now, from (1.2)--(1.4) we get the strict inequality$$ f_\infty(p) > \nabla f(q) \cdot p $$valid for all choices of $p$ and $q$ with $p \neq 0$, hence (2.15) shows that $\nablaS u_1$ must vanish.\end{remark}\begin{remark}Let us consider the more geometric case that $N = 1$. Clearly any affine function $v : \R^2 \rightarrow \R$ is an entire local minimizer, and Theorem 2.1 implies that in any $L^\infty$-neighborhood of such a plane no other surface being the graph of an entire local $BV$-minimizer $\R^2 \rightarrow \R$ exists.\end{remark}\section{Some extensions: one-sided conditions}It turns out that in the scalar case a one-sided condition is sufficient for the Liouville property, more precisely, we have:\begin{theorem}Let $N = 1$ and consider a density $f \in C^2(\R^2)$ with (1.2)--(1.4) being of the type $f(p) = g(|p|)$, $p \in \R^2$, for a function $g \in C^2([0,\infty))$. If $u \in \Bloc(\R^2)$ denotes an entire local $\hat{J}$-minimizer in the sense of Definition 1.1 being bounded from one side, then $u$ is constant.\end{theorem}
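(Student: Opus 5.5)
Without loss of generality $u$ is bounded from below, $u \ge 0$ (add a constant, or replace $u$ by $-u$; the density is even since $f(p)=g(|p|)$). The plan is to reduce to the bounded case of Theorem 1.2 by truncation, using the strict convexity and radial structure of $f$. The natural test functions are the truncations $\min(u,M)$ for levels $M>0$. The key step is to show that each truncation $u_M := \min(u,M)$ is again an entire local $\hat{J}$-minimizer. Since $0\le u_M\le M$, Theorem 1.2 then gives that $u_M$ is constant for every $M$. Letting $M\to\infty$ shows $u$ is constant: if $u$ took values $\alpha<\beta$ on sets of positive measure, then $u_M$ with $M>\beta$ would be nonconstant.

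To prove that $u_M$ is a local minimizer, I would use the classical fact that truncation does not increase the relaxed energy in the scalar case, via the coarea and chain rule structure in $BV$. Take a bounded domain $\Omega$ and a competitor $w$ with $w=u_M$ outside a compact $K\subset\Omega$. Form $v:=\max(u,w)$ on $\{u>M\}$ and $v:=w$ elsewhere; more cleanly, $v:=w+(u-M)^+$ wherever $w\ge M$ is relevant. I would instead use the standard lattice inequality for relaxed functionals with $f(p)=g(|p|)$:
\[
\hat{J}[\min(u,w),\Omega]+\hat{J}[\max(u,w),\Omega]\le \hat{J}[u,\Omega]+\hat{J}[w,\Omega].
\]
This holds with equality for $W^{1,1}$ functions, since the gradients are rearranged pointwise. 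It passes to $\hat J$ by area-strict approximation and the continuity of $\hat J$ under strict convergence (Reshetnyak, Theorem 5.47 in [3] as quoted for (1.8)) together with lower semicontinuity.

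Apply this first with $w\equiv M$ outside a compact set to get $\hat J[u_M]\le$ competitors. Concretely, with a competitor $w$ for $u_M$, $\max(u,w)$ agrees with $u$ off $K$. Minimality of $u$ gives $\hat J[u]\le \hat J[\max(u,w)]$, hence $\hat J[\min(u,w)]\le \hat J[w]$. Then truncate at $M$: $\min(u,w,M)$ has energy at most $\hat J[\min(u,w)]$, because truncation at a level only deletes gradient on $\{>M\}$ and the jump part at the cut is controlled by $f_\infty$ and $f\ge0$, $f(0)=0$ after normalizing $b$ (via $\nabla f(0)=0$ one may subtract $f(0)$). Since $w\le M$ can be assumed by the same truncation argument applied to $w$ itself, $\min(u,w,M)=\min(u_M,w)$. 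A symmetric argument comparing with $u_M$ yields $\hat J[u_M]\le \hat J[w]$.

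The main obstacle is making the lattice inequality and the truncation monotonicity rigorous for $\hat J$ on $BV$, including the boundary terms. Jump parts of $\min/\max$ must be bookkept via the precise representatives and $f_\infty(p)=g'(\infty)|p|$, and one must check that no jump is created on $\partial\Omega$. I would handle this by approximating in the strict topology with smooth functions having the same trace on $\partial\Omega$, or by using Anzellotti-type boundary penalization on a slightly larger ball. For smooth functions the inequalities are pointwise identities for radial $f$.
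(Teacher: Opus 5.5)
Your reduction rests on the claim that the truncation $u_M=\min(u,M)$ is again an entire local $\hat J$-minimizer. This claim is false in general. Moreover, your argument for it never uses that $u$ is bounded from one side, so it would prove a false statement.

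Take $f(p)=\sqrt{1+|p|^2}-1$, which is radial and satisfies (1.2)--(1.4), and the entire minimizer $u(x)=x_1$ (affine functions are minimizers, cf.\ Remark 2.3). Then $u_M=\min(x_1,M)$ is Lipschitz, and $\nabla f(\nabla u_M)$ equals $e_1/\sqrt2$ on $\{x_1<M\}$ and $0$ on $\{x_1>M\}$. Hence for $\varphi\in C^1_0(\Omega)$ the derivative of $\varepsilon\mapsto J[u_M+\varepsilon\varphi,\Omega]$ at $\varepsilon=0$ is $\frac{1}{\sqrt2}\int_{\{x_1<M\}}\partial_1\varphi\,\mathrm{d}x=\frac{1}{\sqrt2}\int_{\{x_1=M\}}\varphi\,\mathrm{d}\mathcal{H}^1$. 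This is negative for $\varphi\le0$ not vanishing on the kink line. So pushing $u_M$ down strictly lowers the energy, and $u_M$ is not a local minimizer. In general, a truncation of a solution is only a one-sided minimizer: it beats competitors lying above it, not those lying below it.

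The precise failure point is your final ``symmetric argument.'' The earlier steps are fine: the lattice inequality, minimality of $u$ tested with $\max(u,w)$, and monotonicity of $\hat J$ under truncation correctly give $\hat J[\min(u_M,w),\Omega]\le\hat J[w,\Omega]$. But this only says that it suffices to compare $u_M$ with competitors $v\le u_M$. To get $\hat J[u_M,\Omega]\le\hat J[v,\Omega]$ for such $v$ you would need a subsolution property of $u_M$. Nothing in the lattice or truncation structure provides this, and the example above shows it fails.

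The one-sided bound has to enter the estimate itself, which is what the paper does:
\begin{itemize}
\item It assumes $u\ge1$ and tests with $u+\varepsilon\eta\,\Phi(u)$, where $\Phi(t)=1/t$.
\item This yields the identity (3.2). There the good terms carry the weight $u^{-2}$ (also in the Cantor and jump parts), while the cut-off term $\nabla f(\nablaA u)\cdot u^{-1}\nabla\eta$ contains the bounded factor $1/u\le1$.
\item The Cauchy--Schwarz and Young argument of Section 2, together with $\int|\nabla\eta|^2\,\mathrm{d}x\le c$ independently of $R$ in two dimensions, then gives $\nablaA u=0$, $\nablaC u=0$ and $\nablaJ u=0$.
\end{itemize}
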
\begin{remark}We leave open to decide, whether the structure condition for $f$ is really necessary.\end{remark}\noindent {\bf Proof of Theorem 3.1:} Without loss of generality we may assume that $u \ge 1$ $\Lmeasure^2$-almost everywhere. As in Section 2 we let $\Omega := B_{2R}$ for some $R > 0$ and choose $\eta \in C_0^1(\Omega)$ and $\varepsilon > 0$. With $\Phi : [1,\infty) \rightarrow \R$, $\Phi(t) := \frac{1}{t}$, we define$$ u_\varepsilon := u + \varepsilon \eta \Phi(u) $$and note that on account of Theorem 3.96, p.189, in [3] it holds $\Phi(u) \in BV(\Omega)$. The local minimality of $u$ implies\begin{equation}\int_{\Omega} f(\nablaA u)  \mathrm{d}x - \int_{\Omega} f(\nablaA u_\varepsilon)  \mathrm{d}x + \int_{\Omega} f_\infty\left(\frac{\nablaS u}{|\nablaS u|}\right)\mathrm{d}|\nablaS u| - \int_{\Omega} f_\infty\left(\frac{\nablaS u_\varepsilon}{|\nablaS u_\varepsilon|}\right)\mathrm{d}|\nablaS u_\varepsilon| \le 0.\end{equation}We have (see equation (3.95), p.189, in [3])\begin{align*}
\nablaA u_\varepsilon &= \nablaA u + \varepsilon \eta \Phi'(u) \nablaA u + \varepsilon \Phi(u) \nabla \eta, \\
\nablaJ u_\varepsilon &= \nablaJ u + \varepsilon \eta \nablaJ \Phi(u) = (u^+ - u^-)\nu_u \mathcal{H}^1 \lfloor J_u + \varepsilon \eta (\Phi(u^+) - \Phi(u^-))\nu_u \mathcal{H}^1 \lfloor J_u, \\
\nablaC u_\varepsilon &= \nablaC u + \varepsilon \eta \Phi'(\widetilde{u}) \nablaC u,
\end{align*} where we also quoted formula (3.90), p.184, from [3] and where $\widetilde{u}$ is the representative of $u$ introduced in Definition 3.63 of [3] on page 160. Since we require $f(p) = g(|p|)$, it holds$$ f_\infty(p) = g'(\infty)|p|, \quad p \in \R^2, $$and of course we may assume that $g'(\infty) := \lim_{t \rightarrow \infty} g'(t) = 1$. This implies\begin{align*}
\int_{\Omega} f_\infty\left(\frac{\nablaS u}{|\nablaS u|}\right)\mathrm{d}|\nablaS u| &- \int_{\Omega} f_\infty\left(\frac{\nablaS u_\varepsilon}{|\nablaS u_\varepsilon|}\right)\mathrm{d}|\nablaS u_\varepsilon| \\
&= \int_{\Omega} \mathrm{d}|\nablaS u| - \int_{\Omega} \mathrm{d}|\nablaS u_\varepsilon| \\
&= \int_{\Omega \cap J_u} |u^+ - u^-| \, \mathrm{d}\mathcal{H}^1 - \int_{\Omega \cap J_u} |u^+ - u^-| \left| 1 + \varepsilon \eta \frac{\Phi(u^+) - \Phi(u^-)}{u^+ - u^-} \right| \, \mathrm{d}\mathcal{H}^1 \\
&\quad + \int_{\Omega} \mathrm{d}|\nablaC u| - \int_{\Omega} \mathrm{d}|(1 + \varepsilon \eta \Phi'(\widetilde{u}))\nablaC u|.
\end{align*} From the properties of $\Phi$ we deduce that for $0 < \varepsilon \le \varepsilon(\eta)$ we can arrange that$$ 1 + \varepsilon \eta \frac{\Phi(u^+) - \Phi(u^-)}{u^+ - u^-} $$and$$ 1 + \varepsilon \eta \Phi'(\widetilde{u}) $$take values in $[1/2, 1]$ and from Lemma 3.3 below it follows\begin{align*}
\int_{\Omega} f_\infty\left(\frac{\nablaS u}{|\nablaS u|}\right)\mathrm{d}|\nablaS u| &- \int_{\Omega} f_\infty\left(\frac{\nablaS u_\varepsilon}{|\nablaS u_\varepsilon|}\right)\mathrm{d}|\nablaS u_\varepsilon| \\
&= -\varepsilon \int_{\Omega \cap J_u} \eta |u^+ - u^-| \frac{\Phi(u^+) - \Phi(u^-)}{u^+ - u^-} \, \mathrm{d}\mathcal{H}^1 - \varepsilon \int_{\Omega} \eta \Phi'(\widetilde{u}) \, \mathrm{d}|\nablaC u|.
\end{align*}Inserting this equation in (3.1), dividing by $\varepsilon > 0$ and passing to the limit $\varepsilon \rightarrow 0$, we find \begin{align*}
&- \int_{\Omega} \nabla f(\nablaA u) \cdot (\eta \Phi'(u) \nablaA u + \Phi(u) \nabla \eta) \, \mathrm{d}x \\
&- \int_{\Omega \cap J_u} \eta |u^+ - u^-| \frac{\Phi(u^+) - \Phi(u^-)}{u^+ - u^-} \, \mathrm{d}\mathcal{H}^1 - \int_{\Omega} \eta \Phi'(\widetilde{u}) \, \mathrm{d}|\nablaC u| \le 0,
\end{align*}and since $\eta$ is an arbitrary function from $C_0^1(\Omega)$, we end up with the equation \begin{align}
\int_{\Omega} \eta u^{-2} \nabla f(\nablaA u) \cdot \nablaA u \, \mathrm{d}x &- \int_{\Omega} \nabla f(\nablaA u) \cdot \frac{1}{u} \nabla \eta \, \mathrm{d}x \nonumber \\
&\quad + \int_{\Omega} \eta \widetilde{u}^{-2} \, \mathrm{d}|\nablaC u| + \int_{\Omega \cap J_u} \frac{1}{u^+ u^-} |u^+ - u^-| \eta \, \mathrm{d}\mathcal{H}^1 = 0
\end{align}corresponding to (2.1). With minor changes we can proceed as before starting from the appropriate version of (2.3), where now in the first integral on the left-hand side of (2.3) the factor $u^{-2}$ occurs in the integrand, etc. This gives first that $\nablaA u$ must vanish, and second -- after returning to (3.2) -- we also get $\nablaC u = 0$ and $\nablaJ u = 0$ on the whole plane. This proves Theorem 3.1. $\square$\vspace*{0.2cm}\\During our calculations we made use of\begin{lemma}Let $\mu$ be a $\R^m$-valued measure on the measure space $(X, \Sigma)$ such that $|\mu|(X) < \infty$, where $|\mu|$ is the total variation of $\mu$ (see [3], Definition 1.4, p.3). Let $g \in L^1(X, |\mu|)$ such that $g \ge 0$ $|\mu|$-almost everywhere and define the $\R^m$-valued measure $\widetilde{\mu} := \mu \lfloor g$, i.e. $\widetilde{\mu}(B) = \int_B g  \mathrm{d}\mu$, $B \in \Sigma$. Then it holds$$ |\widetilde{\mu}| = |\mu| \lfloor g. $$\end{lemma}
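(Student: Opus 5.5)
The identity $|\widetilde{\mu}| = |\mu| \lfloor g$ will follow from the polar decomposition of $\mu$ together with the characterization of the total variation of a measure given by a density. First write $\mu = \sigma |\mu|$ with a $|\mu|$-measurable $\sigma : X \rightarrow \R^m$ satisfying $|\sigma| = 1$ $|\mu|$-almost everywhere (polar decomposition, [3], Corollary 1.29). Since $g \in L^1(X,|\mu|)$, the vector measure $\widetilde{\mu}$ is well defined with finite total variation, and it has the representation
$$ \widetilde{\mu}(B) = \int_B g \sigma \, \mathrm{d}|\mu|, \quad B \in \Sigma, $$
so $\widetilde{\mu} = (g\sigma)\, |\mu|$ with density $g\sigma \in L^1(X,|\mu|;\R^m)$.

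Next I will use the standard fact that for a positive finite measure $\lambda$ and $h \in L^1(X,\lambda;\R^m)$ one has $|h\lambda| = |h|\lambda$ ([3], Proposition 1.23). Applied with $\lambda = |\mu|$ and $h = g\sigma$, this gives $|\widetilde{\mu}| = |g\sigma|\,|\mu| = |g|\,|\mu| = g\,|\mu|$, where the last two equalities use $|\sigma| = 1$ and $g \ge 0$ $|\mu|$-almost everywhere.

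If a self-contained verification of $|h\lambda| = |h|\lambda$ is preferred, I would prove the two inequalities separately. The bound $|h\lambda|(B) \le \int_B |h|\,\mathrm{d}\lambda$ follows by summing $|\int_{B_i} h\,\mathrm{d}\lambda| \le \int_{B_i}|h|\,\mathrm{d}\lambda$ over any countable partition $(B_i)$ of $B$. For the reverse bound, approximate $h/|h|$ (set to $0$ where $h=0$) in $L^1(\lambda)$ by simple functions $\sum_k e_k \chi_{A_k}$ with $|e_k| \le 1$; this yields
$$ \int_B |h|\,\mathrm{d}\lambda \approx \sum_k e_k \cdot (h\lambda)(B\cap A_k) \le |h\lambda|(B). $$
This approximation step is the only mildly technical point, and it is routine. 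Nothing else is needed, since nonnegativity of $g$ is exactly what removes the absolute value.
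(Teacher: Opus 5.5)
Your proof is correct, but it takes a different route from the paper's. The paper takes polar decompositions of \emph{both} measures, $\mu = |\mu|\lfloor E$ and $\widetilde{\mu} = |\widetilde{\mu}|\lfloor \widetilde{E}$, checks $|\widetilde{\mu}| \ll |\mu|$, and writes $|\widetilde{\mu}| = |\mu|\lfloor\Theta$ by Radon--Nikodym. Uniqueness of densities in $L^1(X,|\mu|)$ then gives $gE = \widetilde{E}\Theta$, and $g = \Theta$ follows by taking norms. You decompose only $\mu$, observe $\widetilde{\mu} = (g\sigma)|\mu|$, and apply the total-variation formula $|h\lambda| = |h|\lambda$ once; then $|\sigma| = 1$ and $g \ge 0$ finish the argument immediately.

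Your route is more direct. It needs no polar decomposition of $\widetilde{\mu}$ and no second Radon--Nikodym step. It also avoids a small piece of bookkeeping the paper passes over: $|\widetilde{E}| = 1$ holds only $|\widetilde{\mu}|$-a.e., so on $\{\Theta = 0\}$ one has to argue with $|E| = 1$ instead. The paper's route quotes only the polar decomposition and the Radon--Nikodym theorem, not the formula $|h\lambda| = |h|\lambda$. But the polar decomposition is usually derived from exactly that formula, so your version is the more economical one.

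Your sketch of $|h\lambda| = |h|\lambda$ makes the proof essentially self-contained. It uses partitions for ``$\le$'' and approximation of $h/|h|$ by simple functions $\sum_k e_k\chi_{A_k}$ with disjoint $A_k$ and $|e_k|\le 1$ for ``$\ge$''. The error $\int_B |h/|h| - s|\,|h|\,\mathrm{d}\lambda$ tends to zero by absolute continuity of $\int |h|\,\mathrm{d}\lambda$.
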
\begin{proof}We quote [3], Corollary 1.29, p.14, to get $\mu = |\mu| \lfloor E$, $\widetilde{\mu} = |\widetilde{\mu}| \lfloor \widetilde{E}$ with $S^{m-1}$-valued functions $E, \widetilde{E}$. Moreover, it holds $|\widetilde{\mu}| \ll |\mu|$: if $B \in \Sigma$ such that $|\mu|(B) = 0$, then we get$$ \widetilde{\mu}(B) = \int_B g  \mathrm{d}\mu = \int_B g E  \mathrm{d}|\mu| = 0, $$hence $\widetilde{\mu} \ll |\mu|$, and in conclusion $|\widetilde{\mu}| \ll |\mu|$. By the Radon-Nikodym theorem ([3], Theorem 1.28, p.14) we find $\Theta \in L^1(X, |\mu|)$, $\Theta \ge 0$, such that $|\widetilde{\mu}| = |\mu| \lfloor \Theta$. For any set $B$ we get:$$ \widetilde{\mu}(B) = \int_B g  \mathrm{d}\mu = \int_B g E  \mathrm{d}|\mu| $$and at the same time$$ \widetilde{\mu}(B) = \int_B \widetilde{E}  \mathrm{d}|\widetilde{\mu}| = \int_B \widetilde{E} \Theta  \mathrm{d}|\mu|, $$so that$$ g E = \widetilde{E} \Theta \quad \text{almost everywhere.} $$Recalling $|E| = 1$, $|\widetilde{E}| = 1$, $\Theta, g \ge 0$ we deduce $g \equiv \Theta$, which proves our claim. \end{proof}\vspace*{0.5cm}\noindent {\bf References}\begin{enumerate}\item[{[1] }] R.A. Adams, {\it Sobolev spaces}, Academic Press, New York 1975.\item[{[2] }] E. Giusti, {\it Minimal surfaces and functions of bounded variation}, Birkhäuser, Basel 1984.\item[{[3] }] L. Ambrosio, N. Fusco, D. Pallara, {\it Functions of bounded variation and free discontinuity problems}, Oxford University Press, New York 2000.\item[{[4] }] M. Giaquinta, G. Modica and J. Soucek, {\it Cartesian currents in the calculus of variations I}, Berlin: Springer-Verlag, 1998.\item[{[5] }] E. Mitidieri, A view on Liouville theorems in PDEs. (English. English summary) {\it Anal. Geom. Metr. Spaces} 12 (2024), no. 1, Paper No. 20240008, 21 pp.\item[{[6] }] M. Bildhauer, M. Fuchs, A sufficient condition for the Liouville property of solutions to the Euler-Lagrange equations of variational integrals with nonstandard growth in terms of the ellipticity modulus near the origin (to appear).\end{enumerate}\vspace*{0.5cm}\begin{minipage}[t]{0.48\textwidth}

\textbf{Michael Bildhauer}\\
Saarland University\\
Department of Mathematics\\
P.O. Box 15 11 50\\
66041 Saarbrücken, Germany\\
\texttt{bibi@math.uni-sb.de}

\end{minipage}
\hfill
\begin{minipage}[t]{0.48\textwidth}

\textbf{Martin Fuchs}\\
Saarland University\\
Department of Mathematics\\
P.O. Box 15 11 50\\
66041 Saarbrücken, Germany\\
\texttt{fuchs@math.uni-sb.de}

\end{minipage}\end{document}